\begin{document}
\title*{Upper semicontinuity of KLV polynomials for certain blocks of Harish-Chandra modules}
\author{William M. McGovern}
\titlerunning{Upper semicontinuity}
\institute{William M. McGovern \at University of Washington, Box 354350, Seattle, WA  98195\email{mcgovern@math.washington.edu}}
\maketitle

\abstract{We show that the coefficients of Kazhdan-Lusztig-Vogan polynomials attached to certain blocks of Harish-Chandra modules satisfy a monotonicity property relative to the closure order on $K$-orbits in the flag variety.}
\keywords{Kazhdan-Lusztig-Vogan polynomial, Harish-Chandra module, upper semicontinuity}

\section*{}

Let $G$ be a complex connected reductive group with Lie algebra $\mathfrak g$ and Borel subgroup $B$.  Recall that the flag variety $G/B$ decomposes into finitely many $B$-orbits $\mathcal O_w$, which are indexed by elements $w$ of the Weyl group $W$ of $G$.  The closure $\bar{\mathcal O_w}$ of an orbit $\mathcal O_w$ is called a Schubert variety.  Kazhdan and Lusztig introduced polynomials $P_{v,w}$ in one variable $q$, indexed by pairs $v,w$ of elements in $W$ in \cite{KL79}, which they later showed measured the singularities of Schubert varieties \cite{KL80}.  More precisely, they showed that the coefficient $c_i$ of $q^i$ in $P_{v, w}$ satisfies 
$$
c_i = \dim IH^{2i}_x(\bar{\mathcal O_w};\bar{\mathbb Q}_p)
$$
\noindent for any $x\in\mathcal O_v$, where the right side denotes the local $2i$th intersection cohomology group with values in the constant sheaf $\mathbb Q_p, p$ is a prime and $w_0$ is the longest element of $W$ \cite{KL80}.
A fundamental result of Irving, first proved in \cite{I88}, using results of \cite{GJ81}, asserts that the singularities of $\bar{\mathcal O}_w$ increase as one goes down; more precisely, if $c^{v,w}_i$ denotes the coefficient of $q_i$ in $P_{v,w}$, then $c^{v,w}_i\ge c^{v',w}_i$ whenever $v\le v'\le w$ in the Bruhat order on $W$.  (Following Li and Yong \cite{LY10}, we call this property upper semicontinuity.)  Irving's proof uses representation theory; later Braden and MacPherson gave a geometric argument in \cite{BM01}.  Braden and MacPherson's proof applies to many stratified varieties with a torus action (not just Schubert varieties), but not to closures of orbits of a symmetric subgroup on $G/B$.  The purpose of this note is to establish the corresponding inequality in some cases for coefficients of Kazhdan-Lusztig-Vogan (KLV) polynomials such orbit closures.

So let $\theta$ be an involutive automorphism of $G$ and $K$ the fixed points of $\theta$ acting on $G$.  If $\mathcal O,\mathcal O'$ are two $K$-orbits in $G/B$ with $\bar{\mathcal O}\subset\bar{\mathcal O'}$ and if $\gamma,\gamma'$ are $K$-equivariant sheaves of one-dimensional $\bar{\mathbb Q}_p$ vector spaces on $\mathcal O,\mathcal O'$, respectively (more briefly, one-dimensional sheaves), then Lusztig and Vogan have constructed a polynomial $P_{\gamma,\gamma'}$ such that the coefficient $d_i$ of $q^i$ in $P_{\gamma,\gamma'}$ equals the dimension of the local $2i$th intersection cohomology sheaf of the Deligne-Goresky-MacPherson extension of $\gamma'$ to the closure of $\mathcal O'$, supported at a point in $\mathcal O$ \cite{LV83,V83}.  If $\gamma$ and $\gamma'$ are trivial then we write $P_{\mathcal O,\mathcal O'}$ instead of $P_{\gamma,\gamma'}$.  We then ask under what conditions, given three orbits $\mathcal O_1,\mathcal O_2,\mathcal O_3$ with $\bar{\mathcal O_1}\subset\bar{\mathcal O_2}\subset\bar{\mathcal O_3}$, do we have
$$
d^{\mathcal O_1,\mathcal O_3}_i\ge d^{\mathcal O_2,\mathcal O_3}_i
$$
\noindent for all $i$, where the terms denote the coefficients of $q^i$ in the polynomials corresponding to the pair of orbits in the superscripts.  In general, this fails, for example if $G=SL(3)$ (as we will observe in an example below).  The problem stems from the existence of nontrivial sheaves $\gamma$, even though the inequality is stated for trivial $\gamma$ only.  Now we can state our result.

\begin{theorem}
With notation as above, assume that all $K$-orbits $\mathcal O$ admit only the trivial sheaf (equivalently, all orbits $\mathcal O$ are simply connected, or (as is well known) all Cartan subgroups of the real form $G_0$ of $G$ corresponding to $K$ are connected).  If $\bar{\mathcal O}_1\subseteq\bar{\mathcal O}_2\subseteq\bar{\mathcal O}_3$ then $d^{\mathcal O_1,\mathcal O_3}_i\ge d^{\mathcal O_2,\mathcal O_3}_i$.
\end{theorem}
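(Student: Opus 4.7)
The natural strategy is to induct on the dimension of $\bar{\mathcal O}_3$, using the Lusztig--Vogan recursion for KLV polynomials (the $K$-orbit analog of the Kazhdan--Lusztig recursion) together with the inductive hypothesis applied to the smaller orbits that the recursion produces. The role of the simply connected hypothesis would be to keep the entire induction inside the setting of trivial local systems: the cross action and Cayley transform of a simply connected $K$-orbit by a simple reflection should again produce simply connected orbits, so no nontrivial sheaves $\gamma$ ever appear and every intermediate polynomial is of the form $P_{\mathcal O,\mathcal O'}$ considered in the theorem.

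To carry out the inductive step, I would fix $\mathcal O_3$ and choose a simple reflection $s$ that is \emph{length-decreasing} with respect to $\mathcal O_3$, either because $s$ is complex with $s\cdot\mathcal O_3$ of strictly smaller dimension or because $\mathcal O_3$ arises from a Cayley transform of a smaller orbit along a real root. For each such $s$ the LV recursion expresses $P_{\mathcal O,\mathcal O_3}$ in terms of polynomials $P_{\mathcal O',\mathcal O''}$ with $\dim\bar{\mathcal O}''<\dim\bar{\mathcal O}_3$, plus correction terms built from LV analogues of the Kazhdan--Lusztig $\mu$-coefficients. Applying this formula to both $P_{\mathcal O_1,\mathcal O_3}$ and $P_{\mathcal O_2,\mathcal O_3}$ and invoking the inductive hypothesis on the lower-order terms would reduce the desired inequality to a statement about the correction terms alone.

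The main obstacle I expect is controlling these correction terms. In Irving's original argument, the analogous step uses the good behavior of KL $\mu$-coefficients under passing to Bruhat successors (together with properties of Jantzen filtrations on Verma modules); in the Braden--MacPherson approach, it is encoded in the canonical sheaf on the moment graph of a Schubert variety. Neither tool is immediately available here, since the natural torus acting on $\bar{\mathcal O}_3$ (a maximal torus of $K$) need not produce a moment graph whose one-dimensional orbits are compatible with the stratification by smaller $K$-orbits. Under the simply connected hypothesis, however, I expect the required inequalities for the LV $\mu$-coefficients to follow from a case-by-case analysis of the simple root $s$ (complex, real, compact imaginary, non-compact imaginary) relative to $\mathcal O_3$, the hypothesis precisely eliminating the branching at non-compact imaginary type~II roots that produces the nontrivial sheaves $\gamma$ responsible for the failure of the inequality in the $SL(3)$ example alluded to in the introduction. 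A secondary technical worry is that Cayley transforms can change the isotropy type of a simple root with respect to nearby orbits, so the inductive bookkeeping has to keep track of the orbit pairs, not just their dimensions; I would handle this by replacing the naive induction on $\dim\bar{\mathcal O}_3$ with induction on the length of a reduced expression producing $\mathcal O_3$ in the Richardson--Springer monoid action on $K$-orbits.
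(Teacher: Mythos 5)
Your proposed route (induction on the orbit via the Lusztig--Vogan recursion, with the inequality pushed onto the recursion's correction terms) has a genuine gap exactly where you flag the ``main obstacle,'' and it is not a repairable technicality. The LV recursion, like the ordinary Kazhdan--Lusztig recursion, contains \emph{subtracted} terms indexed by the $\mu$-type coefficients, so coefficientwise monotonicity in the first argument does not propagate through the recursion by any naive induction; one would need precise lower bounds on the subtracted contributions at $\mathcal O_1$ versus $\mathcal O_2$, and no case analysis of the root type (complex, real, compact or noncompact imaginary) supplies these. The clearest evidence that this cannot work as stated is the Schubert case itself: there all the phenomena you hope the simply connected hypothesis eliminates (type II roots, nontrivial local systems) are already absent, yet upper semicontinuity is still not known to follow from the KL recursion --- Irving's proof goes through socle filtrations of Verma modules and Gabber--Joseph/Jantzen-type input, and Braden--MacPherson's through moment-graph intersection cohomology. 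So the step you defer to ``a case-by-case analysis of the simple root $s$'' is precisely the content of the theorem, and your proposal leaves it unproved. (Your reading of the hypothesis is also only partly aligned with its actual role: its force is not merely that no nontrivial sheaves appear, but that no simple root is noncompact imaginary of type II, which forces all constant terms to be $1$ and puts all simple $(\mathfrak g,K)$-modules of trivial infinitesimal character into a single block --- facts needed for the duality argument below, not for taming a recursion.)

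For comparison, the paper's proof is representation-theoretic rather than recursive. It passes through Vogan's character-multiplicity (``IC4'') duality to a dual block $\mathcal B'$ of $(\mathfrak g',K')$-modules, where by Casian--Collingwood the KLV coefficients $d^{\mathcal O,\mathcal O_3}_i$ count composition-factor multiplicities in the layers of the weight filtrations of standard modules; rigidity of standard modules (Casian) identifies these with socle filtrations. The key lemma, proved by induction with wall-crossing functors (Vogan's Propositions 8.2.7, 8.4.5, 8.4.9), is that containment of orbit closures $\bar{\mathcal O}_2\subseteq\bar{\mathcal O}_1$ yields an embedding of the corresponding standard modules $X_{D(\mathcal O_1)}\hookrightarrow X_{D(\mathcal O_2)}$ in $\mathcal B'$; the inequality $d^{\mathcal O_1,\mathcal O_3}_i\ge d^{\mathcal O_2,\mathcal O_3}_i$ then follows from the elementary fact that socle-layer multiplicities of a submodule are bounded by those of the ambient module, exactly as in Irving's argument for Verma modules. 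If you want to salvage your approach, you would in effect have to reprove this module-theoretic monotonicity in purely combinatorial terms, which is an open problem even for Schubert varieties.
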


\begin{proof}
 The proof follows Irving's proof in \cite{I88} for Schubert varieties closely, supplemented by basic facts on block equivalence for Harish-Chandra modules from \cite{V83} and \cite{V81}.  We begin by observing that the hypothesis on $G$ and $K$ implies that none of the roots of $\frak g$ relative to any $K$-orbit (or to the corresponding $\theta$-stable Cartan subgroup of $G$) are of type II, whence the recursion formulas of \cite[6.14,(a,b,c,e)]{V83} imply that the constant term of $P_{\mathcal O,\mathcal O'}$ equals 1 whenever $\bar{\mathcal O}\subseteq\bar{\mathcal O'}$.  Moreover, applying the circle action of \cite[\S5]{V83}, we find that whenever any orbit lying in the image of a simple reflection applied to a given orbit has closure containing that of the orbit, then this image is single-valued and the simple root in question is either complex or noncompact imaginary type I.  As every orbit can be obtained from a closed orbit by repeated application of simple reflections raising it in the closure order, it follows that there is a single block $\mathcal B$ containing all simple $(\frak g,K)$ modules of trivial infinitesimal character.
 
Now we appeal to ``IC4  duality''.  Vogan has shown in \cite{V82} that there is another (possibly disconnected) complex group $G'$ with symmetric subgroup $K'$ and Lie algebra $\mathfrak g'$ and a block $\mathcal B'$ of $(\mathfrak g',K')$-modules, which we may take to have trivial infinitesimal character, such that there is a bijection $D$ between the set $S$ of $K$-orbits in $G/B$ and a set $S'$ of one-dimensional sheaves $\gamma$ over $K'$-orbits in $G'/B'$ that is order-reversing on the underlying orbits relative to the closure order.  (The sheaves in $S'$ parametrize the irreducible modules in $\mathcal B'$.)  In particular, since there is a unique maximal (open) $K$-orbit in $G/B$, there is a unique orbit minimal among the orbits corresponding  to the sheaves in $S'$.  Moreover, the values of the KLV polynomials for $\mathcal B$ at 1 count the multiplicities of composition factors in standard $(\mathfrak g',K')$-modules for $\mathcal B'$; both the irreducible and the standard modules in $\mathcal B'$ are indexed by elements of $S'$.  Casian and Collingwood have refined this result along the lines of the Gabber-Joseph refinement of the Kazhdan-Lusztig conjecture for Verma modules:  they showed that the coefficients of KLV polynomials count multiplicities of composition factors in the layers of the weight filtrations of standard modules in $\mathcal B'$ \cite[1.12]{CC89}, with standard modules indexed by sheaves on lower orbits occurring further down than those indexed by sheaves on higher orbits.  (Here the standard modules in $\mathcal B'$ are normalized to have unique irreducible quotients, not unique irreducible submodules.)  In particular, all standard modules in $\mathcal B'$ have a single copy of the unique simple standard module in this block at the lowest layer of the weight filtration.  Now the proofs of Theorem 2.6.3 and Lemma 2.6.5 of \cite{CI92} carry over to show that the simple standard module is the unique simple subquotient of any standard module in $\mathcal B'$ of largest GK dimension and is the socle of that module.  It then follows from \cite{C89} that all standard modules in $\mathcal B'$ are rigid, so that their socle and weight filtrations coincide.  

We now show inductively that whenever $\gamma,\gamma'$ are two elements of $S'$ with $\gamma=D(\mathcal O_1),\gamma'=D(\mathcal O_2)$ and $\bar{\mathcal O}_2\subseteq\bar{\mathcal O}_1$ , then the standard module $X_\gamma$ indexed by $\gamma$ embeds in $X_{\gamma'}$.  (This condition on $\gamma,\gamma'$ is \textsl{not} equivalent to requiring that $\gamma\le\gamma'$ in the Bruhat order.) We have just shown that this embedding holds if $X_\gamma$ is simple.  In general we assume inductively that this result holds for all pairs $\gamma_1,\gamma_2$ with either $\gamma_2<\gamma'$ in the closure order, or  else $\gamma_2=\gamma',\gamma_1<\gamma$.  As in the proof of Proposition 6.14 of \cite{V83}, locate a simple reflection $s=s_\alpha$ such that either $\gamma,\gamma'$, or both may be realized as the single-valued image of $s$ applied via the circle action of $W$ to an appropriate element of $S$.  For definiteness assume that $\gamma=s\circ\gamma_1$ while $\alpha$ is real for $\gamma'$ and does not satisfy the parity condition, so that $s\circ\gamma'$ is empty; the other cases are similar.  Applying the wall-crossing operator corresponding to $s$ to the standard module $X_{\gamma_1}$, we get a module having $X_{\gamma_1}$ as a submodule with quotient specified by either Proposition 8.2.7 or Proposition 8.4.5 of \cite{V81}, depending on whether $\alpha$ is complex or noncompact imaginary type I for $\gamma_1$.  Applying the same operator to $X_{\gamma'}$, we get a module having $X_{\gamma'}$ as a submodule whose quotient is again $X_\gamma'$, by Proposition 8.4.9 of \cite{V81}.  Hence the embedding of $X_{\gamma_1}$ into $X_{\gamma'}$ induces an embedding of the image $T_\alpha(X_{\gamma_1})$ of $X_{\gamma_1}$ under the wall-crossing operator $T_\alpha$ into $T_\alpha(X_{\gamma'})$, which induces a map from the quotient of $T_\alpha(X_{\gamma_1})$ by its submodule isomorphic to $X_{\gamma_1}$ to the corresponding quotient of $T_\alpha(X_{\gamma'})$, which is $X_{\gamma'}$.   This induced map between the quotients is injective when restricted to the socle of its domain, since there is exactly one copy of the unique irreducible standard module in $\mathcal B'$ inside $X_{\gamma_1}$, at the lowest level of the socle filtration, and exactly two copies of the irreducible standard module in $T_\alpha(X_{\gamma_1})$, one at the lowest and the other at the next-to-lowest level of the socle filtration.  Hence the induced map is injective, and we get an embedding of $X_\gamma$ into $X_{\gamma'}$, as desired.

We now conclude the proof in exactly the same way that Irving did in the Schubert variety case \cite[Corollary 4]{I88}:  $d^{\mathcal O_1,\mathcal O_3}_i$ counts the multiplicity of a suitable composition factor in the $2i$th level of the socle filtration of a suitable standard module $X$ for $(\mathfrak g',K')$, while $d^{\mathcal O_2,\mathcal O_3}_i$ counts the multiplicity of the same composition factor in the $2i$th level of the socle filtration of a submodule of $X$.  The desired inequality follows at once from the definition of the socle filtration.  Notice that the Bruhat order of \cite{V83} coincides with the order by containment of closures for $K$-orbits in $G/B$, in contrast to the situation for $K'$-orbits in $G'/B'$, thanks to \cite[7.11,(vii)]{RS90}.
\end{proof}

We remark that the theorem extends to KLV polynomials for principal blocks (containing the trivial representation) of $(\mathfrak g,K)$-modules even if not all Cartan subgroups of the real form $G_0$ of $G$ are connected, provided that there is only a single conjugacy class of disconnected Cartan subgroups, the groups in this class have only two components, and all modules attached to trivial sheaves on orbits lie in the principal block.  This covers the cases $G=GL(2p), K = GL(p) \times GL(p);  G = SO(2n), K = GL(n);  G = E_7,  K = E_6 \times \mathbb C$.  On the other hand, the case $G = G_2, K = SL(2)\times  SL(2)$ does not work:  there is only one conjugacy class of disconnected Cartan subgroups, but the groups in it have four components.  Here the theorem holds for the nonprincipal block, which has only one simple module, but fails for the principal one.  A more subtle failure occurs for $G=SL(3),K=SO(3)$:  here there is only conjugacy class of disconnected Cartan subgroups and the groups in it have only two components, but not all modules attached to trivial sheaves on orbits lie in the same block.  There is a nontrivial sheaf $\gamma$ attached to the open orbit  $\mathcal O_1$ in $G/B$ and a lower orbit $\mathcal O_2$ (admitting only the trivial sheaf), such that the KLV polynomial attached to the pair $((\mathcal O_1,\gamma),\mathcal O_2)$ s 0, but the one attached to $((\mathcal O_1,\gamma),(\mathcal O_1,\gamma))$ is 1.

We further remark that Collingwood and Irving have explored the properties of Harish-Chandra modules in the block $\mathcal B$ (rather than its dual block $\mathcal B'$), in the special case where the real form $G_0$ has only one conjugacy class of Cartan subgroups.  Here the standard modules do not satisfy inclusion relations corresponding to inclusion of orbit closures, but many other familiar properties of modules in category $\mathcal O$ do carry over \cite{CI92}.

\begin{acknowledgement}
I would like to thank Ben Wyser and Alex Yong for bringing this problem to my attention, Peter Trapa for helpful messages, and the referee for many thoughtful comments.
\end{acknowledgement}

\end{document}